\def\namedlabel#1#2{\begingroup
 #2%
 \def\@currentlabel{#2}%
 \phantomsection\label{#1}\endgroup
}
\renewcommand{\PrintDOI}[1]{\href{http://dx.doi.org/\detokenize{#1}}{doi: \detokenize{#1}}%
	\IfEmptyBibField{pages}{, (to appear in print)}{}}
\theoremstyle{plain}
\newtheorem*{theorem*}{Theorem}
\newtheorem*{thmex*}{Theorem~\ref{example}}
\newtheorem*{thmasymp*}{Theorem~\ref{thmAsymp}}
\newtheorem{theorem}{Theorem}[section]
\newtheorem{lemma}[theorem]{Lemma}
\newtheorem{proposition}[theorem]{Proposition}
\newtheorem{example}[theorem]{Example}
\theoremstyle{definition}
\newtheorem{definition}[theorem]{Definition}
\newcommand{\ben}{\begin{enumerate}}
\newcommand{\een}{\end{enumerate}}
\newcommand{\ed}{\end{document}}
\definecolor{rrr}{rgb}{.9,0,.1}
\definecolor{rr}{rgb}{.8,0,.3}
\date{}
\title{G-Families of Singquandles}
\title{A G-Family of Singquandles and Invariants of Dichromatic Singular links}
\author[M.I.Sheikh]{Mohd Ibrahim Sheikh}
\address{Department of Mathematics, Graduate School of Natural Sciences
	Pusan National University, Busan 46241, Republic of Korea}
 \email{ibrahimsheikh@pusan.ac.kr}
 \author[M. Elhamdadi]{Mohamed Elhamdadi}
\address{University of South Florida, Tampa, Florida, USA}
\email{emohamed@usf.edu}
\author[D. Ali]{DANISH ALI}
\address{Department of Mathematics, Dalian University of Technology, China}
\email{danishali@mail.dlut.edu.cn}
\date{}
\begin{document}
\maketitle

\begin{abstract}
We introduce and investigate {\it dichromatic singular links}. We also construct $\mathnormal{G}$-Family of singquandles and use them to define counting invariants for unoriented dichromatic singular links. We provide some examples to show that these invariants distinguish some dichromatic singular links.
\end{abstract}

\tableofcontents

{\bfseries Mathematics Subject Classifications (2020):} 57M25, 57M27.\\
{\bfseries Key words and Phrases:}  Knot; Link; Singular knot; Singular link; Dichromatic link; Dichromatic singular link; Quandle; Singquandle; Disingquandle; Disingquandle counting invariant. \\ 
\section{Introduction}
A knot is a simple closed curve in three dimensional space $\mathbb{R}^{3}$ and a disjoint union of two or more knots forms a link with two or more components \cites{EN}. Knots and links are categorised in many ways. One way is to use the crossing type as a tool to define a knot or link type. Classical, virtual and singular knots and links serve as examples as they are all recognised by the type of crossing they contain. The other way to define link types is by labelling the components of a classical link. Dichromatic links are defined by using this technique as their components are either labelled by $``1"$ or $``2"$ \cite{HK, HP, K, B, BS}. A singular link is a link with at least one singular crossing. In this paper we use such labelling technique for singular links and define a new type of links which we call {\it dichromatic singular links}.
\par A quandle is an algebraic structure satisfying some axioms that result from the Reidemeister moves for oriented classical knots and links.  If furthermore all right multiplications by fixed elements of the quandle are involutions then such structures are called involutory quandles or Kei's   They are used to investigate unoriented knots and links.  Quandles were independently introduced by Joyce and Matveev \cite{J, M}.  Since then they have been used to construct invariants of knots and links \cite{CCE1, CCEH1, NOS}.
Quandles have been also used to define new algebraic systems by taking a family of quandles at a time. Such systems are called $\mathnormal{G}$-Family of quandles and this notion was introduced in $2013$ by Ishii, Iwakiri, Jang and Oshiro \cite{IMJO}.  A $\mathnormal{G}$-Family of quandles were used to define invariants for handlebody-knots. Also in \cite{LS} Lee and Sheikh  used $\mathbb{Z}_{2}$-Family of quandles to construct algebraic invariants for oriented dichromatic links.
\par In this paper, we introduce the notions of $\mathnormal{G}$-Family of singquandles and dichromatic singular links. A dichromatic singular link is an $\emph{n}$ component singular link with each of its  component labelled as $``1"$ or $``2"$. A singquandle is an algebraic system whose axioms are motivated by Reidemeister moves of unoriented singular knots. By taking a family of such algebaraic systems (Singquandles), we define a new algebraic system which we call $\mathnormal{G}$-Family of singquandles or disingquandle.  The axioms of the latter are motivated by generalized Reidemeister moves of unoriented dichromatic singular links. We discuss various examples and some properties of $\mathnormal{G}$-Family of singquandles, and also show that a $\mathnormal{G}$-Family of singquandles $\mathnormal{X}$ enables us to distinguish unoriented dichromatic singular links by computing their sets of all $\mathnormal{X}$-colorings and proving that these sets are different when their arcs are colored by the elements of $\mathnormal{X}$.
\par This paper is organized as follows.  Section~\ref{SSDL} reviews some preliminaries about singular links, singquandles as well as dichromatic links and their generalized Reidemeister moves. In Section~\ref{SDL} we introduce the notion of dichromatic singular links with some typical examples of unoriented dichromatic singular links. Section~\ref{GFS} introduces the notion of $\mathnormal{G}$-Family of singquandles (disingquandles) with some typical examples of $\mathnormal{G}$-Family of singquandles. Section~\ref{CIUSDL} discusses how $\mathnormal{G}$-Family of singquandles is related to unoriented dichromatic singular links and develop computable invariants for unoriented dichromatic singular links. We discuss some examples which show how the invariants distinguish unoriented dichromatic singular links, and especially how they detect the change of component labelings.
\section{Singular links, Singquandles and Dichromatic Links}\label{SSDL}
In this section we review some preliminaries about singular links, singquandles and dichromatic links. Most of the terminologies of this section can be found in \cite{LS, HN, CEHN}. We begin with the definition of a singular link.
\begin{definition}\label{Def2.1}
A singular link in $\mathbb{S}^3$ is the image of a smooth immersion of $n$ circles in $\mathbb{S}^3$ that has finitely many double points, called singular points.
\end{definition}
A singular link in $\mathbb{R}^{3}$ is represented by a {\it singular link diagram} in the plane $\mathbb{R}^{2}$, which is a classical link diagram with one or more singularities. A singularity is a rigid vertex 
where a link is glued to itself. Figure~\ref{SingLinks} gives two examples of singular links.
\begin{figure}[h]
		\centering
		\includegraphics[width=0.6\textwidth]{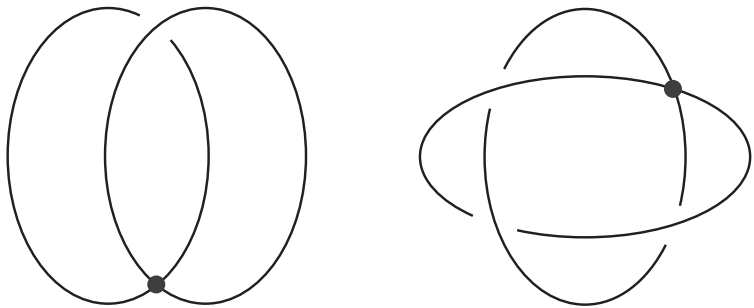}
	\caption{Singular Links}
	\label{SingLinks}
\end{figure}

Two singular links $\mathnormal{L_1}$ and $\mathnormal{L_2}$ are isotopy equivalent if one can be obtained from the other by a finite sequence 
generalized Reidemeister moves for singular links as shown in the following figure. Let $\mathnormal{D_1}$ and $\mathnormal{D_2}$ be two singular link diagrams in $\mathbb{R}^{2}$ representing $\mathnormal{L_1}$ and $\mathnormal{L_2}$, respectively. Then $\mathnormal{L_1}$ and $\mathnormal{L_2}$ are equivalent if and only if $\mathnormal{D_1}$ and $\mathnormal{D_2}$ can be transformed into each other by a finite sequence of classical and singular Reidemeister moves shown in Figure~\ref{SingRMoves}. 
\begin{figure}[h]
\centering
\includegraphics[width=0.8\textwidth]{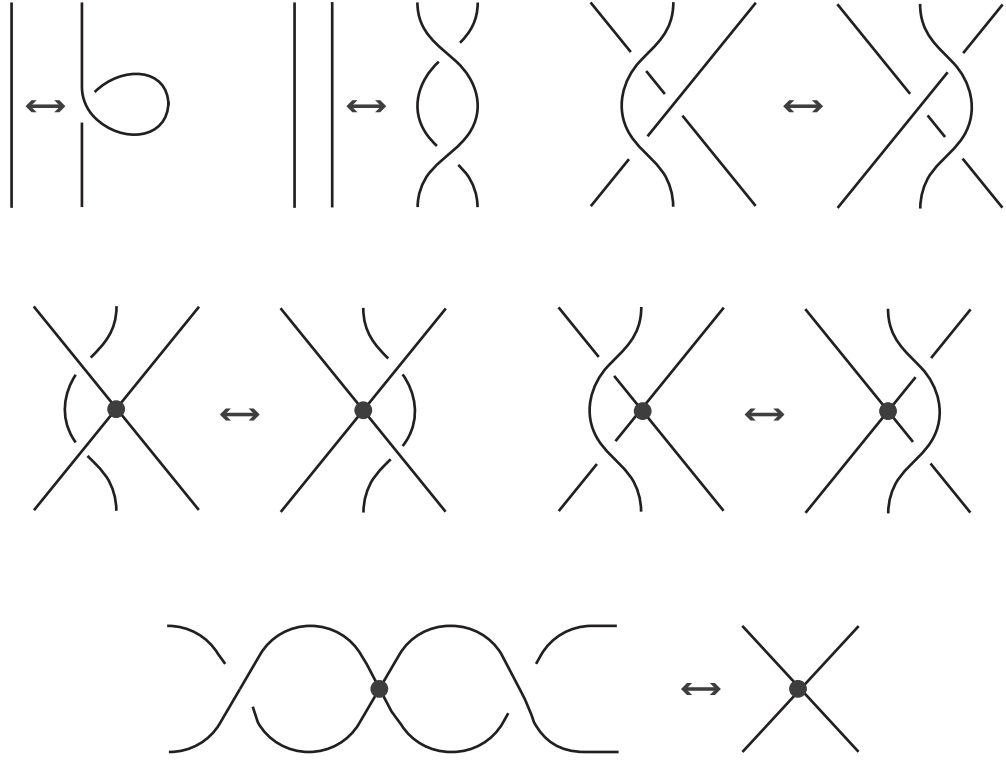}
\caption{Classical and Singular Reidemeister Moves}
\label{SingRMoves}
\end{figure}\\
\begin{definition}\cite{CEHN} \label{Def2.2}
Let $(\mathnormal{X}, *)$ be an involutive quandle. Let $\mathbf{R_1}$ and $\mathbf{R_2}$ be two maps from $\mathnormal{X} \times \mathnormal{X}$ to $\mathnormal{X}$. The quadruple $(\mathnormal{X}, *, \mathbf{R_1}, \mathbf{R_2})$ is called a singquandle if the following axioms are satisfied
    
\[ 
x = \mathbf{R_1}(y, \mathbf{R_2}(x, y)) = \mathbf{R_2}(\mathbf{R_2}(x, y), \mathbf{R_1}(x, y)), \tag{2.2.1} \label{eq:2.2.1} \]
\[
y = \mathbf{R_2}(\mathbf{R_2}(x, y), x) = \mathbf{R_1}(\mathbf{R_2}(x, y), \mathbf{R_1}(x, y)), \tag{2.2.2} \label{eq:2.2.2}
\]
\[
\mathbf{R}(x, y) = (\mathbf{R_1}(y, \mathbf{R_2}(x, y)), \mathbf{R_2}(\mathbf{R_2}(x, y), x)), \tag{2.2.3} \label{eq:2.2.3}
\]
\[
(y * z) * \mathbf{R_2}(x, z) = (y * x) * \mathbf{R_1}(x, z), \tag{2.2.4} \label{eq:2.2.4} 
\]
\[
\mathbf{R_1}(x, y) = \mathbf{R_2}(y * x, x), \tag{2.2.5} \label{eq:2.2.5} 
\]
\[
\mathbf{R_2}(x, y) = \mathbf{R_1}(y * x, x) * \mathbf{R_2}(y * x, x), \tag{2.2.6} \label{eq:2.2.6} 
\]
\[
\mathbf{R_1}(x * y, z) * y = \mathbf{R_1}(x, z * y), \tag{2.2.7} \label{eq:2.2.7} 
\]
\[
\mathbf{R_2}(x * y, z) = \mathbf{R_2}(x, z * y) * y. \tag{2.2.8} \label{eq:2.2.8} 
\]
\end{definition}
We remind the reader that the singquandle axioms come from the generalized Reidemeister moves for unoriented singular knots. Singquandles were introduced as a ramification of quandles 
with the purpose of studying singular links, see for example \cite{CCE1, NOS, CEHN}.
\par The following are few typical examples of singquandles.
\begin{itemize}
    \item For an involutive quandle $(X,*)$ with $x*y= 2y-x$ and $X=\mathbb{Z}_n$, the quadruple $(\mathnormal{X}, *, \mathbf{R_1}, \mathbf{R_2})$ forms a singquandle if and only if the following conditions are satisfied:
    \begin{enumerate}
        \item $\mathbf{R_2}(x, y) = \mathbf{R_1}(x, y) + y - x$,
        \item $\mathbf{R_1}(x, y) = \mathbf{R_1}(2x-y, x) + y - x$,
        \item $\mathbf{R_1}(x, 2y-z) = 2y - \mathbf{R_1}(2y-x, z)$,
        \item $\mathbf{R_2}(2y-x, z) = 2y - \mathbf{R_2}(x, 2x-z)$.
    \end{enumerate}
    \item For an involutive quandle $(X,*)$ where $X$ is a group $G$ and $x*y= yx^{-1}y$, the quadruple $(\mathnormal{X}, *, \mathbf{R_1}, \mathbf{R_2})$ forms a singquandle if and only if the following conditions are satisfied:
    \begin{enumerate}
        \item $\mathbf{R_2}(x, z)z^{-1} yz^{-1} \mathbf{R_2}(x, z) = \mathbf{R_1}(x, z)x^{-1} yx^{-1} \mathbf{R_1}(x, z)$,
        \item $\mathbf{R_1}(x, y) = \mathbf{R_2}(xy^{-1}x, x)$,
        \item $\mathbf{R_2}(x, y) = \mathbf{R_2}(xy^{-1}x, x)[\mathbf{R_1}(xy^{-1}x, x)]^{-1} \mathbf{R_2}(xy^{-1}x, x)$,
        \item $y[\mathbf{R_1}(yx^{-1}y, z)]^{-1}y = \mathbf{R_1}(x, yz^{-1}y)$,
        \item $\mathbf{R_1}(yx^{-1}y, z) = y[\mathbf{R_2}(x, yz^{-1}y)]^{-1}y$.
    \end{enumerate}
\end{itemize}
\begin{definition}\label{Def2.3}
For a positive integer $n \geq 1$.  A dichromatic link is a smooth imbedding of $n$ circles in $\mathbb{R}^{3}$ 
such that each component is labeled as $``1"$ or $``2"$.
\end{definition}
In $\mathbb{R}^{2}$ every dichromatic link is represented by a dichromatic link diagram which is a classical link diagram with each component labelled either $``1"$ or $``2"$. For example, see Figure~\ref{DcLinks}.  
\begin{figure}[h]
		\vspace{0.5cm}
		\includegraphics[keepaspectratio,width= 10cm,
		height=10cm]{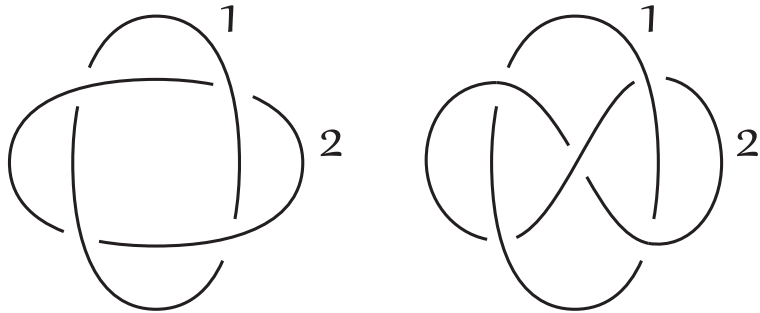}
	\caption{Dichromatic Links}
	\label{DcLinks}
\end{figure}

Two dichromatic links $\mathnormal{L_1}$ and $\mathnormal{L_2}$ are isotopy equivalent if one can be obtained from the other by a finite sequence of generalized
Reidemeister moves for the dichromatic links as shown in the figure \ref{DcMoves}. Let $\mathnormal{D_1}$ and $\mathnormal{D_2}$ be two dichromatic link diagrams in $\mathbb{R}^{2}$ representing $\mathnormal{L_1}$ and $\mathnormal{L_2}$, respectively. Then $\mathnormal{L_1}$ and $\mathnormal{L_2}$ are equivalent if and only $\mathnormal{D_1}$ and $\mathnormal{D_2}$ can be transformed into each other by a finite sequence of generalized Reidemeister moves shown in the following Figure~\ref{DcMoves}.
\begin{figure}
\centering
\includegraphics[width=0.8\textwidth]{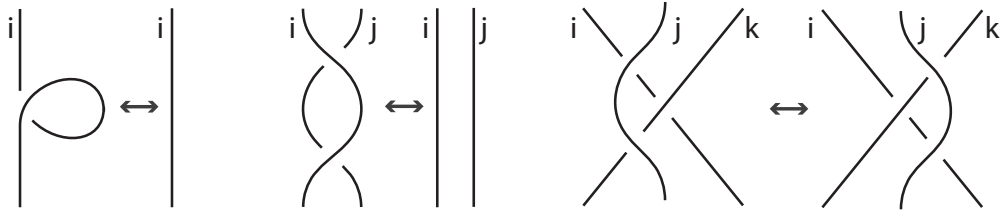}
	\caption{Generalized Reidemeister Moves for Dichromatic Links}
	\label{DcMoves}
\end{figure}

\section{Dichromatic Singular Links}\label{SDL}
This section is devoted to 
dichromatic singular links which is a generalization of singular links.
To generate a dichromatic singular link we label a singular link's components with $``1"$ or $``2"$.  Thus We have the following definition.
\begin{definition}\label{Def3.1}
A singular link $\mathnormal{L}$ in $\mathbb{R}^{3}$ whose each component is colored (labelled) by either $``1"$ or $``2"$ is called a {\it dichromatic singular link}.
\end{definition}
A dichromatic singular link $\mathnormal{L}$ in $\mathbb{R}^{3}$ is represented by a dichromatic singular link diagram $\mathnormal{D}$ in $\mathbb{R}^{2}$ 
in which each component is labelled $``1"$ or $``2"$. 
Figure \ref{SDLinks} shows two examples of unoriented dichromatic singular link diagrams.
\begin{figure}[h]
		\includegraphics[keepaspectratio,width= 10cm,
		height=10cm]{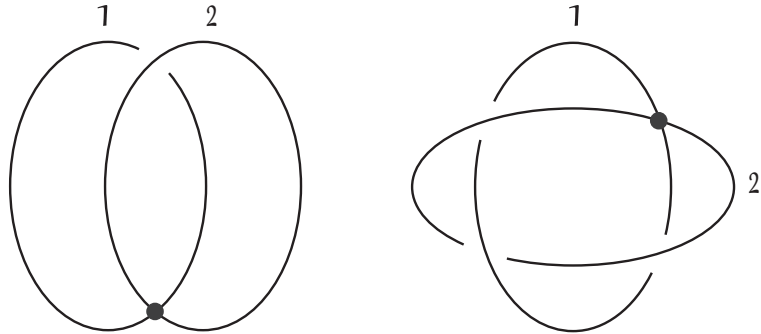}
	\caption{Dichromatic Singular Links}
	\label{SDLinks}
\end{figure}
\par Two dichromatic singular links $\mathnormal{L_1}$ and $\mathnormal{L_2}$ in $\mathbb{R}^{3}$ are {\it ambient isotopic} if there exists a self homeomorphism $h: \mathbb{R}^{3} \rightarrow \mathbb{R}^{3}$ that takes one link to the other and preserves the singularities as well as the labels $``1", ``2"$ such that $h(\mathnormal{L_1})= \mathnormal{L_2}$. Thus two singular dichromatic links $\mathnormal{L_1}$ and $\mathnormal{L_2}$ are equivalent if one can be obtained from the other by a finite sequence of generalized dichromatic singular Reidemeister moves 
preserving the label of each component as shown in the Figure \ref{SDRMoves}. Let $\mathnormal{D_1}$ and $\mathnormal{D_2}$ be two dichromatic singular link diagrams in $\mathbb{R}^{2}$ representing $\mathnormal{L_1}$ and $\mathnormal{L_2}$, respectively. Then $\mathnormal{L_1}$ and $\mathnormal{L_2}$ are equivalent if and only if $\mathnormal{D_1}$ and $\mathnormal{D_2}$ can be transformed into each other by a finite sequence of generalized dichromatic singular Reidemeister moves shown in the following Figure \ref{SDRMoves} where ${i, j, k} \in \{1, 2\}$.\\
\begin{figure}[htbp]
\centering
\includegraphics[width=0.8\textwidth]{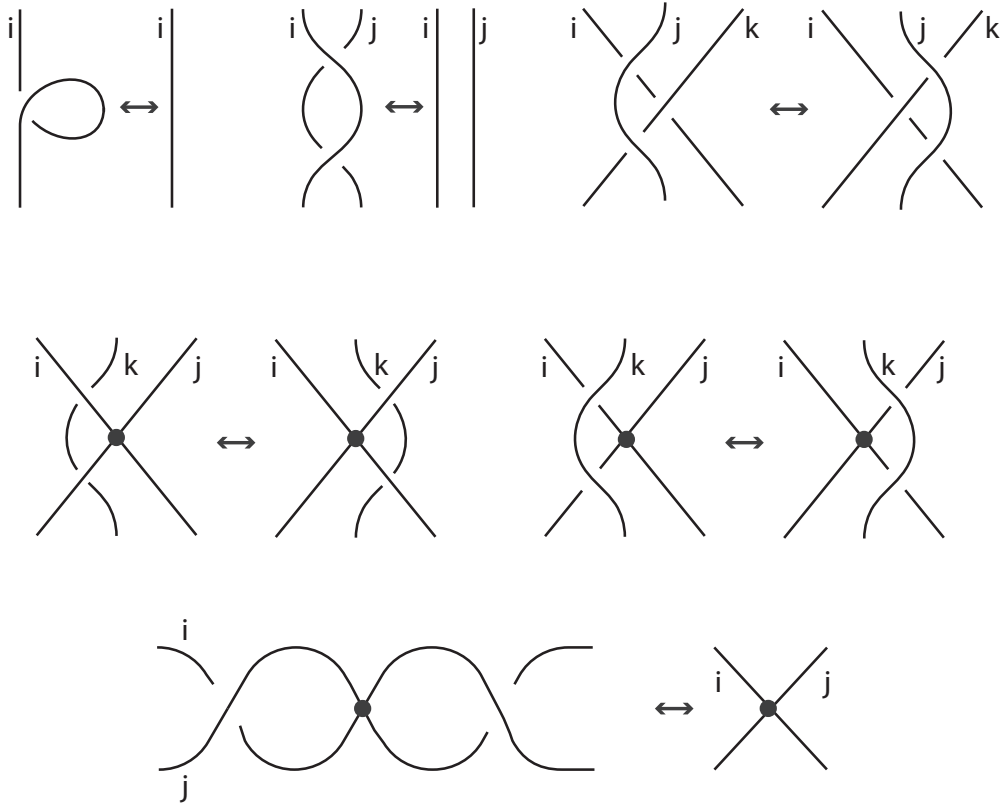}
	\caption{Regular Dichromatic Reidemeister Moves $RI, RII$ and $RIII$ on the top and Dichromatic Singular Reidemeister Moves $RIVa, RIVb$ and $RV$ in the middle and on the bottom.}
	\label{SDRMoves}
\end{figure}
\vspace{0.6cm}
\par A dichromatic singular link with $n$ components is called as an $n$-component dichromatic singular link. Thus an $n$-component dichromatic singular link in $\mathbb R^3$ can be defined as $L=K_1\cup \cdots \cup K_n$. Taking $n=2$, we obtain $2$-component dichromatic singular links. Some $2$-component unoriented dichromatic singular link diagrams (see p 814 of \cite{Oyamaguchi}) are shown in Figure \ref{TOSDLinks}.

\begin{proposition}\label{Prop3.1}
Let $\mathnormal{L_1}$ and $\mathnormal{L_2}$ be two unoriented dichromatic singular links in $\mathbb{R}^{3}$ and let $\mathnormal{D_1}$ and $\mathnormal{D_2}$ be two unoriented dichromatic singular link diagrams in $\mathbb{R}^{2}$ representing $\mathnormal{L_1}$ and $\mathnormal{L_2}$, respectively. Then $\mathnormal{L_1}$ and $\mathnormal{L_2}$ are equivalent if and only if $\mathnormal{D_1}$ and $\mathnormal{D_2}$ are transformed into each other by a finite sequence of generalized Reidemeister moves for unoriented dichromatic singular links which preserve the singularities and the label of each component as shown in the Fig. \ref{SDRMoves} where ${i, j, k} \in \{1, 2\}$ and ambient isotopies of $\mathbb{R}^{2}$.
\end{proposition}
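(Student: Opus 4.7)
The plan is to reduce Proposition~\ref{Prop3.1} to the two Reidemeister-type theorems that are already in the literature, namely the one for unoriented singular links (which gives the classical moves RI--RIII together with the singular moves RIVa, RIVb, RV of Figure~\ref{SingRMoves}) and the one for (classical) dichromatic links (which tells us that the labels $\{1,2\}$ behave consistently with RI--RIII). The proposition then amounts to combining the two lists of moves while keeping track of component labels.

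For the ``if'' direction I would proceed move-by-move, checking that each of the eleven local pictures displayed in Figure~\ref{SDRMoves} (the classical moves RI, RII, RIII and the singular moves RIVa, RIVb, RV, each taken with every assignment of labels $i,j,k\in\{1,2\}$ to the strands involved) can be realized inside an arbitrarily small ball in $\R^{3}$ by a compactly supported ambient isotopy of $\R^{3}$. Since these local isotopies fix the labelling function on components and preserve singular crossings (rigid vertices are sent to rigid vertices), finitely many of them concatenated with ambient isotopies of $\R^{2}$ certainly give an ambient isotopy of $\R^{3}$ realizing $\mathnormal{L_1}\simeq\mathnormal{L_2}$ in the dichromatic singular sense.

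For the ``only if'' direction I would start from an ambient isotopy $h_{t}\colon\R^{3}\to\R^{3}$, $t\in[0,1]$, carrying $\mathnormal{L_1}$ to $\mathnormal{L_2}$, preserving singularities and labels, and consider the one-parameter family of projections of $h_{t}(\mathnormal{L_1})$ to $\R^{2}$. A standard general position perturbation (Reidemeister's original argument, extended by Kauffman to handle the rigid vertices at singular crossings) produces a path of diagrams that is generic except at finitely many parameter values, and at each non-generic parameter the diagram changes by exactly one of the local moves RI--RIII or RIVa--RV. Because the isotopy $h_{t}$ preserves each component setwise and carries labels $``1"$ and $``2"$ to themselves, every arc involved in every one of these local moves retains its original label, which produces precisely one of the labelled instances in Figure~\ref{SDRMoves}.

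The main technical point (and the only real obstacle) is the general position argument at the singular crossings: one must verify that a generic path of projections only encounters codimension-one degenerations involving a rigid vertex that look like RIVa, RIVb, or RV, and no new move types are forced by the labelling. This is verified exactly as in the unoriented singular case treated in \cite{CEHN}, since the labels are locally constant on arcs and therefore invisible to the local geometric analysis of degenerations; the labels merely decorate the resulting list of moves with all choices of $i,j,k\in\{1,2\}$. Combining the two directions gives the claimed equivalence.
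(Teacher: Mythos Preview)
The paper does not actually supply a proof of Proposition~\ref{Prop3.1}; it is stated immediately after the informal discussion of ambient isotopy and the figure of generalized moves, and the next line begins Section~\ref{GFS}. In other words, the authors treat the proposition as a direct consequence of the existing Reidemeister theorems for singular links and for dichromatic links, and simply record the combined move list in Figure~\ref{SDRMoves}.

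Your proposal is therefore not merely consistent with the paper's approach---it \emph{is} the argument the paper implicitly relies on but omits. Your reduction to the unoriented singular Reidemeister theorem (for the move list RI--RIII, RIVa, RIVb, RV) together with the observation that the component labels are locally constant and hence passive under the general position analysis is exactly the standard justification, and the two directions you sketch are correct. The only minor comment is that your citation of \cite{CEHN} for the singular general position argument is slightly imprecise: that paper introduces singquandles but does not itself carry out the codimension analysis; the underlying singular Reidemeister theorem goes back to Kauffman's work on rigid vertex isotopy, which you mention in passing. Otherwise nothing is missing.
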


\section{$\mathnormal{G}$-Family of Singquandles (Disingquandles)}\label{GFS}
 Before introducing the notion of $\mathnormal{G}$-Family of Singquandles, we first recall the definition of $G$-family of quandles from \cite{IMJO}.
    \begin{definition}
    Given a group $G$ and a set $X$, a $G$-family of quandles, denoted by $(G,X)$, is a choice of quandle operation $*^g$ on the set $X$ for each element $g \in G$ such that the following axioms are satisfied
    \begin{enumerate}
        \item 
        For all $g \in G$ and for all $x \in X$, $x*^gx=x$,
        \item
        For all $g,h \in G$ and for all $x,y \in X$, $(x*^g y)*^h y=x*^{gh}y$,
        \item
        For all $x,y \in X$, $x*^e x=x$, where $e$ is the identity element of $G$,
        \item
        
         For all $x,y,z \in X$, $(x*^g y)*^h z= (x*^h z) *^{h^{-1}gh} (y*^h z)$
        
    \end{enumerate}
    \end{definition}
    The following are two examples of $G$-families of quandles.
    \begin{itemize}
        \item 
        For any group $G$ and any set $X$, defining $x*^g y=x$ for all $x,y \in X$ and all $g \in G$.  This gives a $G$-family of quandles called the \emph{trivial} $G$-family of quandles.
        
        \item
        Let $(X,*)$ be a quandle of \emph{cyclic type} \cite{Tamaru} with cardinality $n$.  Let $R_x$ denotes the right multiplication by $x$, thus by definition ${R_x}^{(n-1)}$ is the identity map.  Then define $x*^iy={R_y}^i(x)$ then it is shown in Proposition 2.3 of \cite{IMJO} that $(\mathbb{Z},X)$ is a $\mathbb{Z}$-family of quandles and also $\mathbb{Z}_{(n-1)}$-family of quandles.
    \end{itemize}
\par A $G$-family of quandles $(G,X)$ induces a quandle operation on the set $G \times X$ by
    \[
    (g,x)*(h,y)=(h^{-1}gx,x*^{h}y).
    \]
\par The notion of $G$-family of quandles was introduced by Ishii, Iwakiri, Jang and Oshiro in 2013 in \cite{IMJO} in order to produce invariants of handlebody knots.  They defined coloring invariants and cocycle invariants of handlebody knots.  They used these invariants to detect chirality of some handlebody knots. Later in $2015$, Ishii independently studied the notion of $G$-family of quandles in connection with the multiple conjugation quandle and showed that the later one can be obtained from the first one. In $2017$ and $2018$ Ishii, Nelson and Ishii, Iwakiri, Kamada, Kim, Matsuzaki, Oshiro respectively, 
used this work and introduced the notions of partially  multiplicative  biquandles and multiple conjugation biquandle. In $2021$ Lee and Sheikh jointly used $G$-family of quandles to construct algebraic invariants for oriented dichromatic links \cite{LS}. We introduce the following definition.
\begin{definition}\label{Def4.2}
Let $X$ be a set equipped with two binary operations $*_1$ and $*_2$ such that both $(X,*_1), (X,*_2)$ are involutive quandles.  Let $\mathbf{R_1}, \mathbf{R_2}$ be two maps from $\mathnormal{X} \times \mathnormal{X}$ to $\mathnormal{X}$ such that the quadruples $(\mathnormal{X}, *_1, \mathbf{R_1}, \mathbf{R_2})$ and $(\mathnormal{X}, *_2, \mathbf{R_1}, \mathbf{R_2})$ are singquandles. Then the quintuple $(\mathnormal{X}, *_1, *_2, \mathbf{R_1}, \mathbf{R_2})$ is called a {\it disingquandle} or $\mathbb{Z}_{2}$-{\it family of singquandles} if the following axioms are satisfied
\[
(x *_{1} y) *_{2} z =  (x *_{2} z) *_{1} (y *_{2} z), \tag{4.2.1} \label{eq:4.2.1}
\]
\[
(x *_{2} y) *_{1} z =  (x *_{1} z) *_{2} (y *_{1} z), \tag{4.2.2} \label{eq:4.2.2}
\]
\[
(y *_1 z) *_2 \mathbf{R_2}(x, z) = (y *_2 x) *_1 \mathbf{R_1}(x, z), \tag{4.2.3} \label{eq:4.2.3} 
\]
\[
(y *_2 z) *_1 \mathbf{R_2}(x, z) = (y *_1 x) *_2 \mathbf{R_1}(x, z), \tag{4.2.4} \label{eq:4.2.4} 
\]
\[
\mathbf{R_2}(x, y) = \mathbf{R_1}(y *_1 x, x) *_2 \mathbf{R_2}(y *_1 x, x), \tag{4.2.5} \label{eq:4.2.5} 
\]
\[
\mathbf{R_2}(x, y) = \mathbf{R_1}(y *_2 x, x) *_1 \mathbf{R_2}(y *_2 x, x), \tag{4.2.6} \label{eq:4.2.6} 
\]
\end{definition}
The above axioms of a disingquandle come from the generalized dichromatic singular Reidemeister moves shown in Figure~\ref{SDRMoves} when we take the coloring rule shown in Figure~\ref{SDLinkColoring} under consideration.
\begin{figure}[H]
		\includegraphics[width=0.8\textwidth]{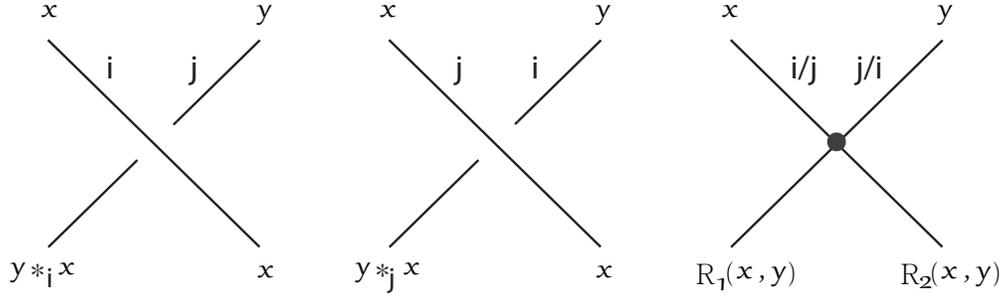}
	\caption{Coloring by a disingquandle}
	\label{SDLinkColoring}
\end{figure}
%

\par The following lemma is motivated by the above construction.
\begin{lemma}\label{lem4.1}
The set of colorings of a dichromatic singular link by a disingquandle does not change by the dichromatic singular Reidemeister moves shown in Figure \ref{SDRMoves}.
\end{lemma}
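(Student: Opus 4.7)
The plan is to verify, move by move, that the coloring conditions imposed on the outgoing arcs of each dichromatic singular Reidemeister move of Figure~\ref{SDRMoves} are in bijective correspondence with those on the incoming arcs, under the coloring rule displayed in Figure~\ref{SDLinkColoring}. Since the arcs on the boundary of the tangle disk are fixed during the move, it suffices to check that for every coloring of the ``before'' picture there is exactly one matching coloring of the ``after'' picture, and vice versa; the disingquandle axioms were engineered for precisely this purpose.

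First I would handle the regular (non-singular) moves $RI$, $RII$, $RIII$. Each of the two arcs in $RI$ carries a single label $i \in \{1,2\}$, so invariance reduces to the idempotency axiom $x *_i x = x$ for each involutive quandle $(X,*_i)$. For $RII$, both strands have a (possibly distinct) label, and the required identity is $(x *_i y) *_i y = x$, which again holds because $(X,*_i)$ is an involutive quandle. For $RIII$, the three participating strands can each carry label $1$ or $2$: if all three labels agree, invariance follows from the self-distributivity of $(X,*_1)$ or of $(X,*_2)$ (the quandle axiom); if the labels are mixed, invariance follows from the two ``cross'' axioms \eqref{eq:4.2.1} and \eqref{eq:4.2.2} of Definition~\ref{Def4.2}, applied to the appropriate case.

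Next I would treat the singular moves $RIVa$, $RIVb$, and $RV$. For a singular crossing, the outgoing arcs are obtained from the incoming ones through the maps $\mathbf{R}_1$ and $\mathbf{R}_2$. When all strands involved in the move carry the same label $i$, the quadruple $(X,*_i,\mathbf{R}_1,\mathbf{R}_2)$ is, by assumption, a singquandle, so the identities \eqref{eq:2.2.1}--\eqref{eq:2.2.8} immediately imply invariance (the move $RV$ uses \eqref{eq:2.2.1}--\eqref{eq:2.2.3} together with the involutivity of $\mathbf{R}$, while $RIVa$, $RIVb$ use the remaining identities). When the strand passing over or under the singular crossing carries a label different from that of the singular strands, the required coloring identity is one of the mixed axioms \eqref{eq:4.2.3}--\eqref{eq:4.2.6}, which cover exactly the configurations produced by applying $*_1$ on one side and $*_2$ on the other side of the singular crossing.

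The only genuine bookkeeping obstacle is a careful enumeration of label assignments $\{i,j,k\} \subset \{1,2\}$ on each move so that every case is matched to the correct axiom; once the correspondence is made, each individual verification is immediate from the definition. After this case analysis, the assignment that sends a coloring of $D$ before the move to the induced coloring of $D$ after the move is a well-defined bijection on colorings, and composing such bijections over any finite sequence of moves shows that the full set of disingquandle colorings is preserved, proving the lemma.
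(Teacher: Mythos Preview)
Your proposal is correct and follows essentially the same approach as the paper: both argue that the disingquandle axioms encode precisely the coloring constraints needed to establish a bijection between colorings before and after each dichromatic singular Reidemeister move. The paper's proof is a two-sentence sketch that simply appeals to the axioms \eqref{eq:4.2.1}--\eqref{eq:4.2.6} of Definition~\ref{Def4.2}, whereas you spell out the case analysis (non-singular moves via the involutive-quandle axioms and the mixed distributivity axioms, singular moves via the singquandle axioms and the remaining mixed axioms) in more detail; your version is a faithful expansion of what the paper leaves implicit.
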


\begin{proof}
 As in the case of classical and singular knot theories, there is one to one correspondence between colorings before and after each of the dichromatic singular Reidemeister moves.  The invariance follows directly from the equations~\ref{eq:4.2.1}, \ref{eq:4.2.2}, \ref{eq:4.2.3}, \ref{eq:4.2.4}, \ref{eq:4.2.5} and \ref{eq:4.2.6} given in Definition~\ref{Def4.2}. 
\end{proof}

\begin{example}\label{Ex1}
Let $(\mathnormal{X}, *_1, \mathbf{R_1}, \mathbf{R_2})$ and $(\mathnormal{X}, *_2, \mathbf{R_1}, \mathbf{R_2})$ be two singquandles such that such that $x \;*_1\; y= x = x \;*_2\; y$ and $\mathbf{R_1}(x,y)=\mathbf{R_2}(x,y)$, then $(\mathnormal{X}, *_1, *_2, \mathbf{R_1}, \mathbf{R_2})$ forms a disingquandle.
\end{example}

\begin{example}\label{Ex2}
Let $(\mathnormal{X}, *_1, \mathbf{R_1}, \mathbf{R_2})$ and $(\mathnormal{X}, *_2, \mathbf{R_1}, \mathbf{R_2})$ be two singquandles.  If for all $x,y \in X$ we have $x *_1 y= x *_2 y$ then $(\mathnormal{X}, *_1, *_2, \mathbf{R_1}, \mathbf{R_2})$ forms a disingquandle.
\end{example}
Now Example~\ref{Ex2} combined with Proposition 3.6 in \cite{CEHN} gives the following example. 
\begin{example}\label{Ex3}
Let $\Lambda=\mathbb{Z}[t,B]/(t^2-1, B(1+t), t-(1-B)^2)$ and $X$ be an $\Lambda$-module.  Define $x*_1 y=tx+(1-t)y, R_1(x,y)=(1-t-b)x+(t+b)y$ and $R_2(x,y)=(1-B)x +By$, then by setting $*_2=*_1$, then one obtains that $(\mathnormal{X}, *_1, *_2, \mathbf{R_1}, \mathbf{R_2})$ forms a disingquandle.
\end{example}

\begin{example}\label{Ex3}
Let $X$ be a module over $\Lambda=\mathbb{Z}[t]$. Define $x*_1 y= x*_2y=tx+(1-t)y, R_1(x,y)=(1-t-B)x+(t+B)y$ and $R_2(x,y)=(1-B)x +By$. Setting $t=-1$ and $X=\mathbb{Z}_{7}$, then the quintuple $(\mathnormal{X}, *_1, *_2, \mathbf{R_1}, \mathbf{R_2})$ forms a disingquandle if $B=4$ or if $B=5$. 
\end{example}
This example can be generalized to $\mathbb{Z}_p$, where $p$ is a prime as follows.
\begin{example}\label{examplePrimeNumber}
Let $p$ be an odd prime and let $B \in \mathbb{Z}_p$.  Consider $\mathbb{Z}_p$ with $x*_1 y= x*_2y=-x+2y, R_1(x,y)=(2-B)x+(-1+B)y$ and $R_2(x,y)=(1-B)x +By$.  Let $\zeta$ be a primitive root of unity in $\mathbb{Z}_p$ so that $\zeta^{\frac{p-1}{2}}=-1$.  By choosing $1-B=\zeta^{\frac{p-1}{2}}$ we obtain that $(\mathbb{Z}_p, *_1, *_2, \mathbf{R_1}, \mathbf{R_2})$ forms a disingquandle.

\end{example}

\begin{example}\label{Ex4}
Let $X=G$ be a multiplicative group with the involutive quandle operations $x*_1 y=x*_2 y=yx^{-1}y$ (core quandle on $G$), then a direct computation gives the fact that the quintuple $(\mathnormal{X}, *_1, *_2, \mathbf{R_1}, \mathbf{R_2})$ forms a disingquandle if and only if $\mathbf{R_1}$ and $\mathbf{R_2}$ satisfies the following equations:
\[
\mathbf{R_2}(x, z)z^{-1}yz^{-1}\mathbf{R_2}(x, z) = \mathbf{R_1}(x, z)x^{-1}yx^{-1}\mathbf{R_1}(x, z), \tag{5.1} \label{eq:5.1} 
\]
\[
\mathbf{R_2}(x, y) = \mathbf{R_2}(xy^{-1}x, x)[\mathbf{R_1}(xy^{-1}x, x)]^{-1}\mathbf{R_2}(xy^{-1}x, x), \tag{5.2} \label{eq:5.2} 
\]
\end{example}
A straightforward computation gives the following solution

  \[
  \mathbf{R_1}(x, y)=x \;\textit{and}\; \mathbf{R_2}(x, y)=y, \;\textit{for all}\; x, y, z \in G.
    \]

Now assume that $G$ is an abelian group without $2$-torsion, so that $x*y=-x+2y$, then $(\mathnormal{X}, *_1, *_2, \mathbf{R_1}, \mathbf{R_2})$ forms a disingquandle if and only if $R_2(x,y)=R_1(x,y)+y-x,$ where $R_1$ satisfies the identity $R_1(x,y)=R_1(-x+2y,x)+y-x$.  For example for any integer $m$, the map $R_1(x,y)=mx+(2m+1)y$ give a solution.  Thus we have a family of solutions parametrized by the integer $m$:
\[
R_1(x,y)=mx+(2m+1)y, \; R_2(x,y)=(m-1)x+2(m+1)y.
\]
\begin{definition} \label{Def4.3}
A map $f:X\rightarrow Y$ is called a homomorphism of disingquandle $(\mathnormal{X}, *_1, *_2, \mathbf{R_1}, \mathbf{R_2})$ and $(\mathnormal{Y}, *'_1, *'_2, \mathbf{R'_1}, \mathbf{R'_2})$ if the following conditions are satisfied for all $x,y,z \in X$
\begin{enumerate}
    \item[(i)] $f(x*_1y)=f(x)*'_1f(y)$, 
    \item[(ii)] $f(x*_2y)=f(x)*'_2f(y)$,
    \item[(iii)] $f(\mathbf{R_1}(x,y))=\mathbf{R'_1}(f(x),f(y))$,
    \item[(iv)] $f(\mathbf{R_2}(x,y))=\mathbf{R'_1}(f(x),f(y))$.
\end{enumerate}
\par If a homomorphism of disingquandle is bijective, then it is called an isomorphism of disingquandle. We say that two $\mathbb{Z}_{2}$-families of singquandles are isomorphic if there exists an ismorphism of disingquandle between them.
\end{definition}
\begin{definition}\label{Def4.4}
Let $(\mathnormal{X}, *_1, *_2, \mathbf{R_1}, \mathbf{R_2})$ be a disingquandle. A subset $Y\subset X$ is called a sub-disingquandle if $(\mathnormal{Y}, *_1, *_2, \mathbf{R_1}, \mathbf{R_2})$ is itself a disingquandle. 
\end{definition}
\begin{example}\label{Ex5}
We use Example~\ref{Ex4} to get the following 2 examples:
\begin{itemize}
\item
Let $X=\mathbb{Z}_9$ be the dihedral quandle with $x*y=-x+2y$, $R_1(x,y)=mx+(2m+1)y, \; R_2(x,y)=(m-1)x+2(m+1)y.$. Then $(\mathnormal{Y}, *_1, *_2, \mathbf{R_1}, \mathbf{R_2})$ is itself a  disingquandle with $Y=\mathbb{Z}_3$.

\item
Let $X=\mathbb{Z}_{25}$ be the dihedral quandle with $x*y=-x+2y$, $R_1(x,y)=mx+(2m+1)y, \; R_2(x,y)=(m-1)x+2(m+1)y.$. Then $(\mathnormal{Y}, *_1, *_2, \mathbf{R_1}, \mathbf{R_2})$ is itself a  disingquandle with $Y=\mathbb{Z}_5$.

\end{itemize}
\end{example}
Given a homomorphism of disingquandles, we obtain the following lemma.
\begin{lemma}\label{Lem4.1}
The image {\it Im(f)} of any homomorphism of disingquandle $f$ defined from $(\mathnormal{X}, *_1, *_2, \mathbf{R_1}, \mathbf{R_2})$ to $(\mathnormal{Y}, *'_1, *'_2, \mathbf{R'_1}, \mathbf{R'_2})$ is always a sub-disingquandle. 
\end{lemma}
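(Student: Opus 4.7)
The plan is to verify Definition~\ref{Def4.4} directly by showing two things: first, that $\mathrm{Im}(f)\subset Y$ is closed under each of the four operations $*'_{1},\,*'_{2},\,\mathbf{R'_{1}},\,\mathbf{R'_{2}}$ of the codomain, and second, that the restricted structure on $\mathrm{Im}(f)$ automatically inherits every disingquandle axiom from $(Y,*'_{1},*'_{2},\mathbf{R'_{1}},\mathbf{R'_{2}})$.

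For closure, I would take arbitrary elements $a,b\in\mathrm{Im}(f)$ and pick preimages $x,y\in X$ with $f(x)=a$ and $f(y)=b$. Applying the four homomorphism conditions (i)--(iv) of Definition~\ref{Def4.3} in turn gives
\[
a*'_{1}b = f(x*_{1}y),\qquad a*'_{2}b = f(x*_{2}y),
\]
\[
\mathbf{R'_{1}}(a,b) = f(\mathbf{R_{1}}(x,y)),\qquad \mathbf{R'_{2}}(a,b) = f(\mathbf{R_{2}}(x,y)),
\]
each of which lies in $\mathrm{Im}(f)$. Hence all four operations of $Y$ restrict to well-defined operations on the subset $\mathrm{Im}(f)$.

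Next, since $(Y,*'_{1},*'_{2},\mathbf{R'_{1}},\mathbf{R'_{2}})$ is itself a disingquandle, both $(Y,*'_{1})$ and $(Y,*'_{2})$ are involutive quandles and the mixed axioms \eqref{eq:4.2.1}--\eqref{eq:4.2.6} hold for every triple in $Y$. These identities are universally quantified equations in the operations, so they continue to hold when the variables are restricted to the subset $\mathrm{Im}(f)\subset Y$; the involutive quandle axioms for $*'_{1},*'_{2}$ restrict in the same way. Combined with the closure established above, this shows that $(\mathrm{Im}(f),*'_{1},*'_{2},\mathbf{R'_{1}},\mathbf{R'_{2}})$ satisfies the conditions of Definition~\ref{Def4.2}, so by Definition~\ref{Def4.4} it is a sub-disingquandle of $(Y,*'_{1},*'_{2},\mathbf{R'_{1}},\mathbf{R'_{2}})$.

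There is no real obstacle here; the argument is the standard ``image of a homomorphism is a subobject'' pattern. The only thing to be careful about is to explicitly invoke each of the four homomorphism conditions to get closure under each of the four operations, rather than conflating $\mathbf{R_{1}}$ and $\mathbf{R_{2}}$ (and to read condition (iv) of Definition~\ref{Def4.3} with $\mathbf{R'_{2}}$ on the right-hand side, as it is clearly intended).
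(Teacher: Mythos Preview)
Your proof is correct and follows essentially the same approach as the paper: establish closure of $\mathrm{Im}(f)$ under the four operations via the homomorphism conditions (i)--(iv), then observe that the disingquandle axioms, being universally quantified identities holding in $Y$, restrict to $\mathrm{Im}(f)$. Your version is in fact more explicit than the paper's, and your remark about the intended $\mathbf{R'_2}$ in condition (iv) is well taken.
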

\begin{proof}
Given that $f:(\mathnormal{X}, *_1, *_2, \mathbf{R_1}, \mathbf{R_2})\rightarrow (\mathnormal{Y}, *'_1, *'_2, \mathbf{R'_1}, \mathbf{R'_2})$ is a homomorphism. Then the equations $(i), (ii), (iii)$ and $(iv)$ of Definition \ref{Def4.3} imply that {\it Im(f)} is closed under $*_1, *_2, \mathbf{R_1}$ and $\mathbf{R_2}$. Then the axioms of disingquandle are satisfied in $Y$. Hence they are automatically satisfied in {\it Im(f)}. This ends the proof of the lemma.
\end{proof}
Now we introduce the notion of {\it fundamental disingquandle} of an unoriented dichromatic singular link and provide an illustrative example. Let $D$ be a diagram of an unoriented dichromatic singular link $L$ in $\mathbb{R}^{2}$.  We define the {\it fundamental disingquandle} of $D$, denoted by $\mathcal{DSQ}(D)$, as the set of equivalence classes of disingquandle words $W$-$\mathcal{DSQ}(D)$ under the equivalence relation generated by the axioms of disingquandle and the crossing relations shown in Figure \ref{SDLinkColoring}, where $W$-$\mathcal{DSQ}(D)$ are defined by taking a set of generators $X = \{x_{1}, x_{2}, x_{3}, ..... , x_{n} \}$ which corresponds bijectively with the semi arcs in $D$, recursively by the following two rules:
\begin{enumerate}
\item $X \subset$ $W$-$\mathcal{DSQ}(D)$,\\
\item If $x, y \in$ $W$-$\mathcal{DSQ}(D)$, then\\ $x*_1y,
x*_2y, \mathbf{R_1}(x,y), \mathbf{R_2}(x,y) \in$ $W$-$\mathcal{DSQ}(D)$.
\end{enumerate}

\begin{example}
    Consider the following unoriented dichromatic singular link $L$.  
    \begin{figure}[H]
		\includegraphics[keepaspectratio,width= 10cm,
		height=8cm]{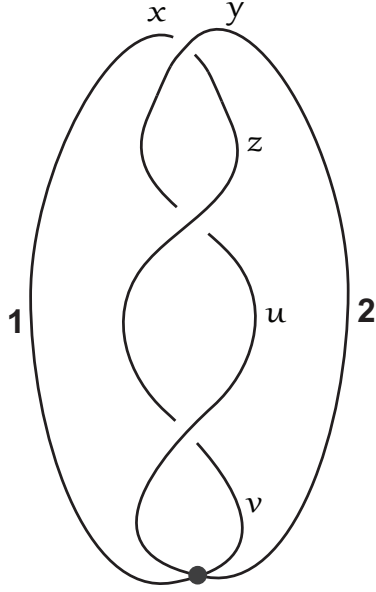}
	\caption{Fundamental Disingquandle of Unoriented Dichromatic Singular Links}
	\label{DSQLinks}
\end{figure}
    The {\it fundamental disingquandle} of $L$ is given by 
    \[
    \mathcal{DSQ}(L)=\langle x,y,z,u,v |\; z=x*_2y; u=y*_1z; v=z*_2u; x=R_1(u,v); y=R_2(u,v) \rangle.  
    \]
    This presentation can be simplified to the following presentation of $\mathcal{DSQ}(L)$
    \[
   \langle x,y|\;x=R_1(y*_1( x*_2y),(x*_2y)*_2 (y*_1( x*_2y)));\\ y=R_2(y*_1( x*_2y),(x*_2y)*_2 (y*_1( x*_2y))) \rangle.  
    \]
    
\end{example}

\section{Computable Invariants for Unoriented Dichromatic Singular Links}\label{CIUSDL}
 Let $D$ be an unoriented dichromatic singular link diagram and let $\mathcal A(D)$ denote the set of all arcs of $D$. Let $(X,*_1, *_2, \mathbf{R_1}, \mathbf{R_2})$ be a disingquandle.  A {\it disingquandle coloring} of $D$ by $X$, or simply {\it disingquandle $X$-coloring} of $D$, is a map $\mathcal{C}: \mathcal A(D) \rightarrow X$ such that at every classical and singular crossing, the relations depicted in Figure \ref{SDLinkColoring} hold. The disingquandle element $\mathcal{C}(s)$ is called a {\it color} of the arc $s$ and the pair $(D, \mathcal C)$ is called the {\it $X$-colored unoriented dichromatic singular link diagram by $\mathcal C$}. The set of all disingquandle $X$-colorings of $D$ is denoted by ${\rm Col}^{dsq}_X(D)$. Then we have the following:
\begin{lemma}\label{lem5.1} 
Let $D$ and $D'$ be two unoriented dichromatic singular link diagramss in $\mathbb R^2$ that can be transformed into each other by unoriented generalized dichromatic singular Reidemeister moves as shown in the Figure \ref{SDRMoves}. Then for any finite disingquandle $X$, there is a one-to-one correspondence between ${\rm Col}^{dsq}_X(D)$ and ${\rm Col}^{dsq}_X(D')$. 
\end{lemma}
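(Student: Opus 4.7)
The plan is to prove this by reducing to the case of a single Reidemeister move and then checking each move type individually. Since equivalence of diagrams under the generalized dichromatic singular Reidemeister moves means a finite sequence of moves connects $D$ to $D'$, it suffices by an obvious induction on the length of this sequence to produce a bijection ${\rm Col}^{dsq}_X(D)\to {\rm Col}^{dsq}_X(D')$ when $D$ and $D'$ differ by exactly one such move. The bijection will be the identity on arcs lying outside the local disk in which the move takes place.

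For a fixed move, I would proceed as follows. Inside the local disk of the move, every coloring of $D$ is completely determined by the colors assigned to the arcs entering the disk, together with the crossing rules in Figure~\ref{SDLinkColoring}. Similarly for $D'$. Given a coloring $\mathcal C$ of $D$, define a candidate coloring $\mathcal C'$ of $D'$ by keeping the external colors the same and filling in the internal arcs of $D'$ using the crossing rules. To show this is well defined and gives a coloring, one must verify that when the crossing rules are applied on both sides of the move, the colors produced on the outgoing arcs agree. This verification is carried out case by case over the moves RI, RII, RIII, RIVa, RIVb, RV and over all admissible labelings $i,j,k\in\{1,2\}$ of the components involved:
\begin{itemize}
\item The monochromatic RI, RII, and RIII moves reduce to the involutive quandle axioms for $(X,*_i)$, which hold by hypothesis.
\item Dichromatic RIII moves with two distinct labels reduce to the mixed distributivity axioms \eqref{eq:4.2.1} and \eqref{eq:4.2.2}.
\item Monochromatic singular Reidemeister moves around an $\mathbf{R_1},\mathbf{R_2}$ crossing reduce to the singquandle axioms for $(X,*_i,\mathbf{R_1},\mathbf{R_2})$, already granted in Definition~\ref{Def4.2}.
\item The genuinely dichromatic singular moves RIVa and RIVb, where one strand has label different from that of the singular strand, reduce to the compatibility axioms \eqref{eq:4.2.3} and \eqref{eq:4.2.4}, while the dichromatic RV move reduces to \eqref{eq:4.2.5} and \eqref{eq:4.2.6}.
\end{itemize}
The map $\mathcal C\mapsto \mathcal C'$ is then a well-defined function from ${\rm Col}^{dsq}_X(D)$ to ${\rm Col}^{dsq}_X(D')$. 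Applying the same construction to the inverse move produces the two-sided inverse, so the correspondence is a bijection.

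The main obstacle is purely bookkeeping: each move must be read with both overstrand orientations and with every admissible pair or triple of component labels, and in each sub-case the correct disingquandle axiom from Definition~\ref{Def4.2} has to be identified. No single step is difficult, but one must be careful that the unoriented setting forces each classical and singular crossing rule in Figure~\ref{SDLinkColoring} to be compatible with reading the crossing in either direction, which is why both $*_1$ and $*_2$ are required to be \emph{involutive} quandles and why axioms \eqref{eq:4.2.3}--\eqref{eq:4.2.6} appear in the symmetric pairs they do. Once the case analysis is complete, Lemma~\ref{lem4.1} is already essentially established, and the present lemma follows by packaging that local invariance into the explicit bijection described above.
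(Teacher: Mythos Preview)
Your proposal is correct and follows essentially the same approach as the paper: reduce to a single move, fix colors outside the local disk, and extend uniquely inside using the disingquandle axioms case by case. Your write-up is in fact more explicit than the paper's own proof about which axiom governs which labeled move; the only minor discrepancy is that the paper attributes axioms \eqref{eq:4.2.1}--\eqref{eq:4.2.2} to move $RIVa$ rather than to the mixed-label $RIII$, but this does not affect the validity of your argument.
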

\begin{proof}
It suffices to prove the assertion for the case that $D'$ is obtained from $D$ by a single an unoriented generalized dichromatic singular Reidemeister move. Let $E$ be an open disk in $\mathbb R^2$ where the unoriented generalized dichromatic singular Reidemeister move under consideration is applied. Then $D\cap (\mathbb R^2 - E) = D'\cap (\mathbb R^2 -E)$. Now let $\mathcal C$ be a disingquandle $X$-coloring of $D$. Since $(X,*_1, \mathbf{R_1}, \mathbf{R_2})$ and $(X,*_2, \mathbf{R_1}, \mathbf{R_2})$ are both singquandles by the disingquandle definition \ref{Def4.2}, it is obviously seen from the Figure \ref{SDRMoves} that the restriction of $\mathcal C$ to $D\cap (\mathbb R^2 - E) (=D'\cap (\mathbb R^2 -E))$ can be extended to a unique disingquandle $X$-coloring of $D'$ for unoriented generalized dichromatic singular Reidemeister moves $RI, RII$ and $RIII$. Also, using the disingquandle axioms \ref{eq:4.2.1} to \ref{eq:4.2.6}, it is easily seen from the Figure \ref{SDRMoves} that the restriction of $\mathcal C$ to $D\cap (\mathbb R^2 - E) (=D'\cap (\mathbb R^2 -E))$ can be extended to a unique disingquandle $X$-coloring of $D'$ for an unoriented generalized dichromatic Reidemeister moves $RIVa, RIVb$ and $RV$. This completes the proof.
\end{proof}
\par In an $X$-colored unoriented dichromatic singular link diagram $(D, \mathcal C)$, we think of elements of a disingquandle $X$ as labels for the arcs in $D$ with different operations at crossings as shown in Figure \ref{SDLinkColoring}. Then it is seen from Lemma \ref{lem5.1} that the disingquandle axioms of Definition \ref{Def4.2} are transcriptions of a generating set of unoriented generalized Reidemeister moves for unoriented dichromatic singular links which are sufficient to generate any other unoriented generalized dichromatic singular Reidemeister moves. That is, the axioms \ref{eq:4.2.1} and \ref{eq:4.2.2} come from the unoriented generalized dichromatic singular Reidemeister move $RIVa$, the axioms \ref{eq:2.2.3} and \ref{eq:2.2.4} come from the unoriented generalized dichromatic singular Reidemeister move $RIVb$ and the axioms \ref{eq:2.2.5} and \ref{eq:2.2.6} come from the unoriented generalized dichromatic singular Reidemeister move $RV$ as seen in Figure \ref{SDRMoves}.
	
\begin{theorem}\label{theorem5.1} 
Let $L$ be an unoriented dichromatic singular link in $\mathbb R^3$ and let $D$ be a diagram of $L$. Then for any finite disingquandle $X$, the cardinality $\sharp{\rm Col}^{dsq}_X(L)$ is an invariant of $L$.
\end{theorem}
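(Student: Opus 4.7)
The plan is to reduce the theorem to an already-established bijection statement and to a combinatorial equivalence of diagrams. First I would observe that the quantity $\sharp{\rm Col}^{dsq}_X(D)$ is \emph{a priori} defined in terms of a particular diagram $D$ of $L$, so the entire content of the theorem is to show that this cardinality does not depend on the choice of diagram representing $L$. Since $L$ is defined up to ambient isotopy and the preservation of singularities and component labels, Proposition~\ref{Prop3.1} tells us that any two diagrams $D$ and $D'$ of $L$ are connected by a finite sequence of unoriented generalized dichromatic singular Reidemeister moves (of types $RI$, $RII$, $RIII$, $RIVa$, $RIVb$, $RV$) together with planar ambient isotopies of $\mathbb R^2$.

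Next I would dispatch the planar ambient isotopies, which only relabel arcs (the set $\mathcal A(D)$ is in canonical bijection with $\mathcal A(D')$ under such an isotopy) and do not alter crossings, so they trivially induce a bijection ${\rm Col}^{dsq}_X(D)\to {\rm Col}^{dsq}_X(D')$. For the generalized Reidemeister moves, I would invoke Lemma~\ref{lem5.1}, which already produces a bijection between the coloring sets of two diagrams differing by a single such move; the proof of that lemma is exactly the local check using axioms~\ref{eq:4.2.1}--\ref{eq:4.2.6} together with the singquandle axioms built into $(X,*_1,\mathbf R_1,\mathbf R_2)$ and $(X,*_2,\mathbf R_1,\mathbf R_2)$.

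Putting the pieces together, I would argue by induction on the length $n$ of a sequence of moves connecting $D$ to $D'$. The base case $n=0$ (up to planar isotopy) is trivial, and the induction step composes the bijection supplied by Lemma~\ref{lem5.1} at the $n$-th move with the bijection obtained from the first $n-1$ moves by the induction hypothesis. Since a composition of bijections is a bijection, we obtain a bijection ${\rm Col}^{dsq}_X(D)\to {\rm Col}^{dsq}_X(D')$, hence $\sharp{\rm Col}^{dsq}_X(D)=\sharp{\rm Col}^{dsq}_X(D')$. Therefore this cardinality is a well-defined invariant of $L$, and we may unambiguously write $\sharp{\rm Col}^{dsq}_X(L)$.

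I do not anticipate any real obstacle: the substantive work has been front-loaded into Definition~\ref{Def4.2} (where the disingquandle axioms were specifically designed to mirror the moves in Figure~\ref{SDRMoves}) and into Lemma~\ref{lem5.1} (where the one-to-one correspondence of colorings across a single move was verified). The only minor subtlety worth mentioning explicitly in the write-up is that the label $``1"$ or $``2"$ attached to each component is preserved throughout, so the ``right'' operation ($*_1$ versus $*_2$) is unambiguously assigned at every crossing both before and after each move, which is precisely what the dichromatic versions $RIVa$, $RIVb$, $RV$ in Figure~\ref{SDRMoves} guarantee.
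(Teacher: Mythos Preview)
Your proposal is correct and follows essentially the same approach as the paper: both reduce the invariance to Lemma~\ref{lem5.1}, which supplies a bijection of coloring sets across each generalized dichromatic singular Reidemeister move. Your write-up is simply more explicit than the paper's (you spell out the appeal to Proposition~\ref{Prop3.1}, the handling of planar isotopies, and the induction on the number of moves), whereas the paper compresses all of this into a single sentence invoking Lemma~\ref{lem5.1}.
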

	
\begin{proof}
		Let $D'$ be any other unoriented dichromatic singular link diagram of $L$ obtained from $D$ by applying a finite number of unoriented generalized dichromatic singular Reidemeister moves. Then it is direct from Lemma \ref{lem5.1} that $\sharp{\rm Col}^{dsq}_X(D')=\sharp{\rm Col}^{dsq}_X(D)$. This completes the proof.
	\end{proof}	
If $X$ is a finite disingquandle, we call the cardinality $\sharp{\rm Col}^{dsq}_X(D)$ the {\it disingquandle $X$-coloring number} or the {\it disingquandle counting invariant} of $L$, and denote it by $\mathbb Z^{dsq}_X(L)$, i.e., $\mathbb Z^{dsq}_X(L)=\sharp{\rm Col}^{dsq}_X(D).$

\begin{theorem}\label{Th5.2}
Let $L$ be an unoriented dichromatic singular link and let $X$ be a disingquandle. Then there is a one-to-one correspondence between ${\rm Col}^{dsq}_X(L)$ and ${\rm Hom}(\mathcal{DSQ}(L),X)$. Consequently, $\mathbb Z^{dsq}_X(L)=\sharp{\rm Hom}(\mathcal{DSQ}(L),X).$
\end{theorem}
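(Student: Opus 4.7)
The plan is to establish the bijection by exploiting the universal (presentation) property of $\mathcal{DSQ}(L)$. Fix a diagram $D$ of $L$ with arc set $\mathcal{A}(D) = \{a_1, \dots, a_n\}$ and let $\{x_1, \dots, x_n\}$ be the corresponding generators of $\mathcal{DSQ}(D)$ coming from rule (1) in the definition of the fundamental disingquandle. Since $\mathcal{DSQ}(L)$ is defined by the equivalence classes of words in $\{x_1, \dots, x_n\}$ modulo (i) the disingquandle axioms and (ii) the crossing relations pictured in Figure~\ref{SDLinkColoring}, it satisfies the universal property that a disingquandle homomorphism out of $\mathcal{DSQ}(L)$ is determined by, and equivalent to, an assignment of values in the target disingquandle to the generators that respects the crossing relations.

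First, I would define a map
\[
\Phi : {\rm Hom}(\mathcal{DSQ}(L),X) \longrightarrow {\rm Col}^{dsq}_X(D)
\]
by $\Phi(f)(a_i) := f(x_i)$. Because $f$ is a disingquandle homomorphism, it carries every crossing relation among the $x_i$'s in $\mathcal{DSQ}(L)$ to the corresponding identity in $X$; hence $\Phi(f)$ is a valid $X$-coloring of $D$ in the sense of Figure~\ref{SDLinkColoring}.

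Next, I would construct the inverse map
\[
\Psi : {\rm Col}^{dsq}_X(D) \longrightarrow {\rm Hom}(\mathcal{DSQ}(L),X)
\]
as follows. Given a coloring $\mathcal{C}$, set $\tilde{\Psi}(x_i) := \mathcal{C}(a_i)$ on generators, and extend $\tilde\Psi$ to all disingquandle words via the recursive rule (2) in the construction of $W$-$\mathcal{DSQ}(D)$, using the operations $*_1, *_2, \mathbf{R_1}, \mathbf{R_2}$ of $X$. The key step is to verify that $\tilde{\Psi}$ descends to a well-defined map $\Psi(\mathcal{C})$ on equivalence classes: the axioms of disingquandle are satisfied in $X$ (so relations of type (i) are killed), and the crossing relations used to build $\mathcal{DSQ}(L)$ are exactly the equations that a disingquandle $X$-coloring makes true by definition (so relations of type (ii) are killed as well). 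By construction the descended map preserves $*_1, *_2, \mathbf{R_1}, \mathbf{R_2}$, so it is a disingquandle homomorphism.

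Finally, I would check that $\Phi \circ \Psi = {\rm id}$ and $\Psi \circ \Phi = {\rm id}$, which is immediate from the definitions on the generators and the fact that disingquandle homomorphisms are determined by their values on a generating set. This yields the bijection; the equality $\mathbb{Z}^{dsq}_X(L) = \sharp {\rm Hom}(\mathcal{DSQ}(L),X)$ then follows by taking cardinalities, while the fact that the right side depends only on $L$ (and not on the chosen diagram $D$) is guaranteed by Lemma~\ref{lem5.1} together with Theorem~\ref{theorem5.1}. The only genuinely delicate point, and where I would spend care, is the well-definedness argument in the construction of $\Psi$: one must systematically check that each defining relation of $\mathcal{DSQ}(L)$, whether a disingquandle axiom or a crossing relation, is annihilated by $\tilde{\Psi}$ before one may descend it to equivalence classes.
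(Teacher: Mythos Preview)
Your proposal is correct and follows the same approach as the paper: both arguments use that $\mathcal{DSQ}(L)$ is presented by arc-generators modulo crossing relations, so that a homomorphism to $X$ is the same data as an $X$-coloring. Your version is in fact more careful than the paper's, which only sketches the map $\Psi$ (from colorings to homomorphisms) and leaves the inverse and the well-definedness check implicit.
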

\begin{proof}
Since the disingquandle $X$-colorings of $L$ generate the fundamental disingquandle $\mathcal{DSQ}(L)$ of a link $L$ which is generated by its arc labels. Also each arc of $L$ is assigned an element of $X$, for a disingquandle $X$-coloring of $L$, so we can associate each coloring a map $f:\mathcal{DSQ}(L)\rightarrow X$ where if an arc is labelled $a$ in the fundamental disingquandle and is assigned the color $x\in X$, then $f(a)=x$. This completes the proof.
\end{proof}

Now we give an example.
\begin{example}\label{exmp-dsqci}
 Now, we give an explicit example of \emph{three unoriented dichromatic singular links} $L_1$, $L_2$ and $L_3$ and show that the coloring invariant distinguishes them from each other.  Consider the singquandle
 $(\mathnormal{X}, *, \mathbf{R_1}, \mathbf{R_2})$, where $X=\mathbb{Z}_6$, $x*_1y=x*_2y=-x+2y=x*y$, $R_1(x,y)=x+3,$ and $R_2(x,y)=3x^2+3x+y+3$ (see page 9 of \cite{CCEH}).  By checking directly that the equations of Definition~\ref{Def4.2} hold we obtain that the quintuple $(\mathnormal{X}, *_1,*_2, \mathbf{R_1}, \mathbf{R_2})$ form a disingquandle.  Now coloring the two top arcs of link $L_1$ by $x$ and $y$ as in the figure \ref{SDL1} below gives that the coloring equations are:
\[
x=R_1(R_1(x,y),R_2(x,y)) \quad \mbox{and} \quad y=R_2(R_1(x,y),R_2(x,y)).
\]
\begin{figure}[h]
\includegraphics[width=0.35\textwidth]{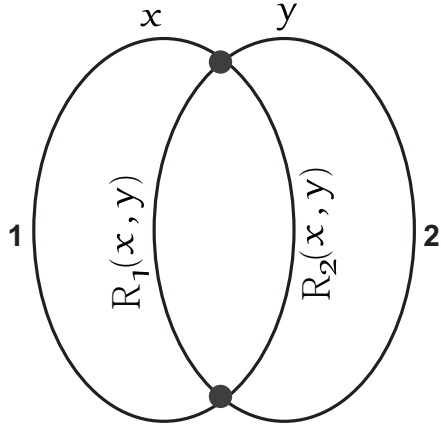}
	\caption{Unoriented Dichromatic Singular Link($L_1$)}
	\label{SDL1}
\end{figure}
One then gets the system,
$\begin{cases}
x=3+3+x,\\
y=3+3(3+x) +3(3+x)^2+(3+3x+3x^2).
\end{cases}$\\
Any pair $(x,y)$ gives a solution to this system over $\mathbb{Z}_6$ and thus the set ${\rm Col}^{dsq}_X(L_1)$ is equal to $\mathbb{Z}_6^2$.\\
Now coloring the link $L_2$ as in the figure \ref{SDL2} below gives that the coloring equations are:
\[
R_1(R_1(x,y),x*R_1(x,y))=R_2(x,y)*y \quad \mbox{and} \quad R_2(R_1(x,y),x*R_1(x,y))=y. 
\]
\begin{figure}[h]
		\includegraphics[width=0.45\textwidth]{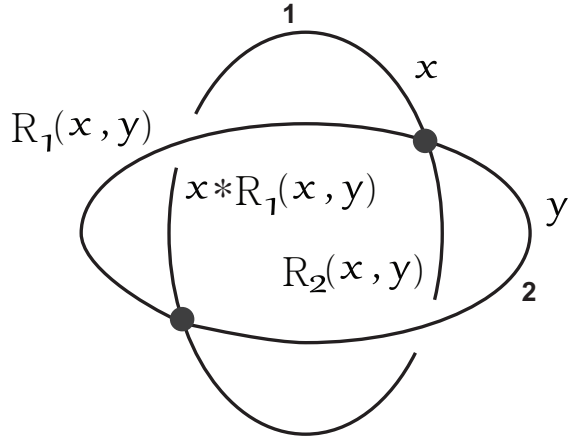}
	\caption{Unoriented Dichromatic Singular Link($L_2$)}
	\label{SDL2}
\end{figure}
One then obtain that the solution is given by $y=3x^2+4x+3$, thus the ${\rm Col}^{dsq}_X(L_2)$ is \[
\{(0,3),(1,4),(2,5),(3,0),(4,1),(5,2) \}.
\]
Now we consider the link $L_3$ (dichromatic singular Whitehead) as in the following figure \ref{SDL3}.  
\begin{figure}[h]
		\includegraphics[width=0.49\textwidth]{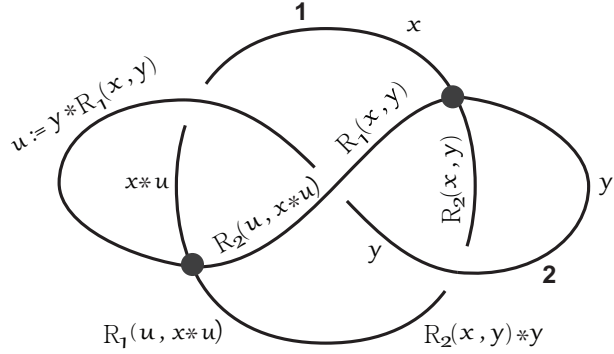}
	\caption{Unoriented Dichromatic Singular Link($L_3$)}
	\label{SDL3}
\end{figure}

The coloring equations are:
\[
R_2(y*R_1(x,y), x* ( y*R_1(x,y)))=R_1(x,y),\]
and
\[R_1(y*R_1(x,y), x* ( y*R_1(x,y)))= R_2(x,y)*y.
\]\\

The system of these two equations reduces to
$
\begin{cases}
0=3y^2+y+2x,\\
0=3x^2+x+2y,
\end{cases}
$\\
and thus we obtain that $2(y-x)=0$ giving $x=y$ or $y=x+3$.\\

Then ${\rm Col}^{dsq}_X(L_3)=\{(x,x), x \in X \} \;\cup \; \{(x,x+3), x \in X \} $. \\ 
\par Thus the \emph{three} links $L_1$, $L_2$ and $L_3$ are \emph{pairwise} distinct.  

\end{example}

\begin{example}
Let $1^2_1, 3^2_1, 4^2_1, 5^2_1, 5^2_2, 5^2_3, 6^2_1, 6^2_2, 6^2_3, 6^2_4, 6^2_5, 6^2_6, 6^2_7, 6^2_8, 6^2_9, 6^2_{10}, 6^2_{11},$ and $6^2_{12}$ be the eighteen unoriented dichromatic  singular links in Figure \ref{TOSDLinks} and let $X$ be the disingquandle in Example \ref{exmp-dsqci}. By similar calculations as in the example, we obtain the following table:

\begin{equation*}
{\begin{array}{|c|c|}
\hline
L & \# {\rm Col}^{dsq}_X(L) \\\hline
 6^2_2 & 0 \\\hline
  6^2_6 & 2 \\\hline
  4^2_1, 6^2_{12} & 18 \\\hline
1^2_1,3^2_1, 5^2_1, 5^2_2, 5^2_3, 6^2_1, 6^2_3, 6^2_4,  6^2_5,6^2_7, 6^2_8, 6^2_9, 6^2_{10}, 6^2_{11} & 6 \\\hline

\end{array}}
\end{equation*}\\


This table shows that the disingquandle counting invariant $\mathbb Z^{dsq}_X(L)$ distinguishes some of these eighteen unoriented dichromatic singular links.
\begin{figure}[h]
		\includegraphics[width=0.75\textwidth]{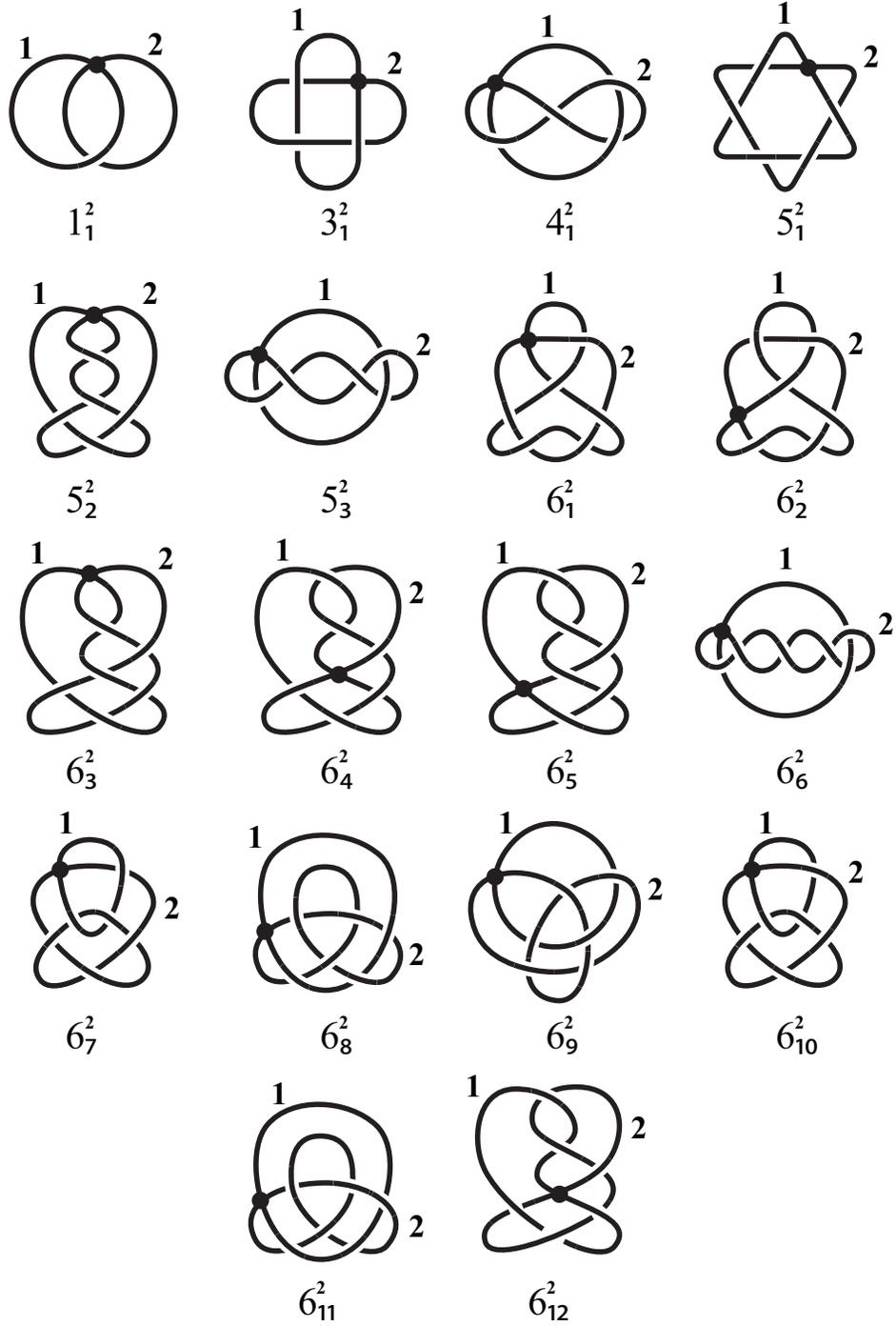}
	\caption{Table of Unoriented Dichromatic Singular Links}
	\label{TOSDLinks}
\end{figure}
\end{example}

\newpage
\section*{Acknowledgement} 
Mohamed Elhamdadi was partially supported by Simons Foundation collaboration grant 712462.

\end{document}